\documentclass[reqno,a4paper,12pt]{amsart} 

\usepackage{amsmath,amscd,amsfonts,amssymb}
\usepackage{mathrsfs,dsfont}

\numberwithin{equation}{section}
\numberwithin{figure}{section}

\addtolength{\hoffset}{-1.5cm}
\addtolength{\textwidth}{3cm}

\parskip .06in

\newcommand\R{\mathbb{R}}
\newcommand\C{\mathbb{C}}

\newcommand\Z{\mathbb{Z}}

\newcommand\T{\mathbb{T}}
\newcommand\al{\alpha}

\newcommand\del{\delta}

\newcommand\lam{\lambda}
\newcommand\Lam{\Lambda}

\newcommand\sig{\sigma}
\newcommand\Om{\Omega}

\newcommand\eps{\varepsilon}

\renewcommand\S{\mathcal{S}}

\renewcommand\le{\leqslant}
\renewcommand\ge{\geqslant}

\renewcommand\geq{\geqslant}
\newcommand\sbt{\subset}

\newcommand{\ft}[1]{\widehat{#1}}

\newcommand{\supp}{\operatorname{supp}}

\renewcommand{\Re}{\operatorname{Re}}
\renewcommand{\Im}{\operatorname{Im}}

\theoremstyle{plain}
\newtheorem{thm}{Theorem}[section]
\newtheorem{lem}[thm]{Lemma}

\newtheorem{cor}[thm]{Corollary}

\newtheorem{prop}[thm]{Proposition}

\newtheorem*{claim*}{Claim}

\newcommand{\thmref}[1]{Theorem~\ref{#1}}
\newcommand{\secref}[1]{Section~\ref{#1}}

\newcommand{\propref}[1]{Proposition~\ref{#1}}

\newcommand{\corref}[1]{Corollary~\ref{#1}}

\theoremstyle{definition}
\newtheorem{definition}[thm]{Definition}
\newtheorem*{definition*}{Definition}
\newtheorem*{remarks*}{Remarks}
\newtheorem*{remark*}{Remark}

\newenvironment{enumerate-roman}
{\begin{enumerate}
\addtolength{\itemsep}{5pt}
}
{\end{enumerate}}

\newenvironment{enumerate-num}
{\begin{enumerate}
\addtolength{\itemsep}{4pt}
}
{\end{enumerate}}

\newenvironment{enumerate-alph}
{\begin{enumerate}
\addtolength{\itemsep}{4pt}
}
{\end{enumerate}}

\begin{document}

\title
[Completeness of uniformly discrete translates in $L^p(\mathbb{R})$]
{Completeness of uniformly discrete \\ translates in $L^p(\mathbb{R})$}

\author{Nir Lev}
\address{Department of Mathematics, Bar-Ilan University, Ramat-Gan 5290002, Israel}
\email{levnir@math.biu.ac.il}

\date{January 4, 2025}
\subjclass[2020]{42A65, 46E30, 46E50}
\keywords{Complete systems, translates}
\thanks{Research supported by ISF Grant No.\ 1044/21.}

\begin{abstract}
We construct a real sequence $\{\lambda_n\}_{n=1}^{\infty}$ satisfying $\lambda_n = n +  o(1)$,  and a Schwartz function $f$ on $\mathbb{R}$, such that for any $N$ the system of translates $\{f(x - \lambda_n)\}$, $n > N$, is complete in the space $L^p(\mathbb{R})$ for every $p>1$. The same system is also complete in a wider class of Banach function spaces on $\mathbb{R}$.
\end{abstract}

\maketitle


\section{Introduction}

\subsection{}
A result due to Atzmon and Olevskii \cite{AO96} asserts that
for every $p>2$, there is a function $f \in L^p(\R)$
whose translates  by the positive integers 
$\{f(x-n)\}$, $n=1,2,\dots$,
span the whole space $L^p(\R)$, that is,
these translates are complete in  $L^p(\R)$.
There are now several known approaches to this result,
see \cite{Nik99}, \cite{FOSZ14}.

In the space $L^2(\R)$, on the other hand,
integer translates can never be complete.
In this connection, it was conjectured (see \cite{RS95}) 
that no system
$\{f(x-\lam)\}$, $\lam \in \Lam$,
can be complete in $L^2(\R)$ if the set $\Lam$
is \emph{uniformly discrete}, i.e.\   satisfies the condition
\begin{equation}
\label{eq:uddef}
  |\lam' - \lam| \ge  \del(\Lam) >0
\end{equation}
for any two distinct points  $\lam, \lam' \in \Lam$.
However, this is not the case. It was proved by Olevskii
\cite{Ole97} that for any ``small perturbation'' of the integers, 
\begin{equation}
\label{eq:pertz}
\lam_n = n + \alpha_n, \quad  
0 \ne \alpha_n \to 0 \quad (|n| \to +\infty)
\end{equation}
there exists  $f \in L^2(\R)$
 such that  the system
$\{f(x-\lam_n)\}$, $n \in \Z$,
is complete in $L^2(\R)$. It was moreover
 shown in \cite{OU04} 
that if the perturbations are exponentially small, i.e.\
\begin{equation}
\label{eq:exppertn}
0 < |\alpha_n | < C r^{|n|}, \quad n \in \Z,
\end{equation}
for some $0<r<1$ and $C>0$, 
then  $f$ can be chosen in the Schwartz class.

More recently, by a development of the approach
from \cite{OU04}, the latter result was
extended  to $L^p(\R)$ spaces \cite{OU18a},  namely,
it was proved that there is a  function
 $f$ in the Schwartz class, such that if the sequence
 $\{ \lam_n \}$, $n \in \Z$, 
satisfies \eqref{eq:pertz} and \eqref{eq:exppertn} 
then the system  $\{f(x-\lam_n)\}$, $n \in \Z$,
is complete in $L^p(\R)$ for every $p>1$.

In fact the latter result holds for a wide class of
separable Banach function spaces on $\R$
\cite{OU18b}. On the other hand, in the space
 $L^1(\R)$ no  system of uniformly discrete 
translates can be complete, see \cite{BOU06}.

\subsection{}
The goal of the present note is to provide a different approach
to the construction of a function $f$ which spans the
space  $L^p(\R)$, $p>1$, by a uniformly discrete system of translates.
In fact, in our construction we shall use only positive translates, and 
moreover the completeness remains true for any subsystem 
obtained by the removal of a finite number of elements.

The result can be stated as follows:

\begin{thm}
\label{thm:A1}
There is a real sequence 
$\{\lam_n\}_{n=1}^{\infty}$ satisfying
$ \lam_n = n +  o(1)$,  and there is 
a Schwartz function $f$ on $\R$, 
such that for any $N$ the system 
\begin{equation}
\label{eq:ftranslama1}
\{f(x - \lam_n)\}, \; n>N,
 \end{equation}
is complete in the space $L^p(\R)$ for every $p>1$.
\end{thm}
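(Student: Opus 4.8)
The plan is to establish completeness by duality. Recall that a system of translates $\{f(x-\lam_n)\}$, $n>N$, is complete in $L^p(\R)$ if and only if the only functional in the dual space $L^{p'}(\R)$ (with $1/p+1/p'=1$) that annihilates all these translates is the zero functional. So I would fix $g \in L^{p'}(\R)$ satisfying $\int f(x-\lam_n)\,g(x)\,dx = 0$ for all $n>N$, and aim to show $g=0$. The key observation is that this condition says the convolution-type quantity $h(\lam_n)=0$, where $h := \tilde f * g$ with $\tilde f(x)=f(-x)$; that is, the function $h$ vanishes on the perturbed integer set $\{\lam_n\}$, $n>N$. Since $f$ is Schwartz and $g \in L^{p'}$, the function $h$ is continuous and in fact quite smooth, and decays in a controlled way. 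The whole problem thus reduces to a uniqueness statement: I must design $f$ (equivalently its Fourier transform $\ft f$) and the perturbations $\alpha_n = \lam_n - n$ so cleverly that no nonzero $h$ of the relevant class can vanish on all the points $\lam_n$ with $n>N$.

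Since the heart of the matter is a Fourier-analytic uniqueness principle, I would pass to the Fourier side. Writing $\ft h = \ft{\tilde f}\cdot \ft g$, the support of $\ft h$ lies inside $\supp \ft f$, so I would choose $\ft f$ to be supported on a fixed compact interval, say $[-\pi,\pi]$ up to scaling, making $h$ a band-limited (Paley--Wiener) function. The classical rigidity for such functions is that a band-limited function of exponential type vanishing at the integers must be a scalar multiple of a fixed interpolating function; the freedom to kill it comes precisely from perturbing the nodes. The strategy, following the spirit of the Olevskii--Ulanovskii constructions, is to build $\ft f$ and the perturbations so that the only band-limited function vanishing at all $\lam_n$, $n>N$, is zero. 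Concretely I would try to arrange an explicit interpolation/quasi-analyticity argument: encode the values of $h$ at the nodes through a suitable generating series or infinite product, exploit that removing finitely many nodes $n\le N$ does not restore any nontrivial solution (this is what gives completeness for every truncation $n>N$), and use the exponential smallness of the $\alpha_n$ to force enough analytic regularity that a vanishing result propagates. The requirement that the same $f$ work simultaneously for every $p>1$ forces $\ft f$ to be chosen once and for all, independent of $p$; the dependence on $p$ enters only through the a priori regularity and decay of $h$ coming from $g\in L^{p'}$, which one must check is always strong enough to trigger the uniqueness.

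For the Banach-function-space extension asserted in the abstract, the same duality skeleton applies: one replaces $L^{p'}$ by the associate space and verifies that any annihilating functional still produces a band-limited $h$ in the target uniqueness class. The only structural facts needed are that translation is bounded on the space (so the completeness question is well posed) and that the dual elements have enough integrability/decay to make $h$ continuous and band-limited.

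The hard part, I expect, will be the joint construction of $\ft f$ and the perturbation sequence $\{\alpha_n\}$ achieving the uniqueness while keeping $f$ in the Schwartz class and keeping the completeness stable under removal of finitely many terms. In particular, making the argument robust to the truncation $n>N$ for \emph{every} $N$ is delicate: it means the node set cannot have any ``rigid'' finite subset whose removal admits a nontrivial band-limited interpolant, so the perturbations must be tuned along the whole tail. Getting the exponential decay of $\alpha_n$ to simultaneously guarantee Schwartz regularity of $f$, the $o(1)$ asymptotics $\lam_n=n+o(1)$, and the quasi-analytic uniqueness for all exponents $p>1$ at once is where the real technical weight of the proof lies.
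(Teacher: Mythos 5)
Your reduction by duality (an annihilator $g\in L^{p'}(\R)$ produces a function $h=\tilde f \ast g$ vanishing on $\{\lam_n\}$, $n>N$) is the correct and standard first step, but the structural choice on which you build everything else --- taking $\ft f$ compactly supported so that $h$ is band-limited --- is fatal. If $\supp \ft f$ is compact (or merely not all of $\R$), choose a nonzero Schwartz function $g$ whose Fourier transform is smooth and compactly supported in an open interval disjoint from $\supp \ft f$. Then for \emph{every} real $\lam$,
\begin{equation}
\int_{\R} f(x-\lam)\,\overline{g(x)}\,dx
\;=\; \int_{\R} \ft f(\xi)\,\overline{\ft g(\xi)}\, e^{-2\pi i \lam \xi}\, d\xi \;=\; 0,
\end{equation}
and $\overline{g}\in L^{p'}(\R)$ for every $p'$. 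So no family of translates of a band-limited $f$ --- uniformly discrete or not, one-sided or not --- can be complete in any $L^p(\R)$, $1\le p<\infty$; completeness forces $\supp \ft f=\R$. Consequently the Paley--Wiener/quasi-analyticity machinery you intend to run cannot even get started. A secondary problem: even within the band-limited world, the rigidity you cite concerns functions vanishing at \emph{all} (two-sided) integers, whereas your node set $\{\lam_n\}_{n>N}$ is one-sided and truncated; $\sin(\pi z)$ already vanishes on every tail of $\Z$, so ``vanishing on the nodes implies a multiple of a fixed interpolant'' is false in your setting, and no tuning of exponentially small perturbations repairs it.

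The paper avoids this trap by going the opposite way on the Fourier side: it sets $f:=\ft\varphi$ where $\varphi$ is a Schwartz function vanishing to \emph{infinite order at every integer} (the space $I_0(\R)$), so the spectrum of $f$ is not compact --- generically it is all of $\R$, punctured by infinite-order zeros at $\Z$. The Fourier transform carries the weighted exponentials $\{\varphi(t)e^{2\pi i \lam_n t}\}$, $n>N$, onto the translates $\{f(x-\lam_n)\}$, $n>N$, and the proof splits into two parts: (a) density of $\ft{I}_0(\R)$ in $L^p(\R)$, $p>1$, which requires exactly that a functional $\al\in L^{q}(\R)$ with $\supp\ft\al\sbt\Z$ must vanish; and (b) completeness of the weighted exponentials in $I_0(\R)$, proved not by quasi-analytic uniqueness with exponentially small perturbations, but by a Baire category argument (following Abakumov--Atzmon--Grivaux) built on Landau's theorem that a suitable sequence $\{e^{2\pi i \lam_n t}\}$, $n>N$, is complete in $C(\Om_{h,K})$ for every finite union of intervals avoiding neighborhoods of $\Z$ and every $N$. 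The two features your sketch correctly flags as hardest --- using only positive translates and stability under deleting finitely many elements --- come directly from Landau's theorem (which holds for every truncation $n>N$) combined with a diagonal construction; they are not properties one can graft onto the Olevskii--Ulanovskii scheme that your proposal is essentially paraphrasing.
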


Moreover, our approach allows us to extend the result to the wider 
class of Banach function spaces on $\R$ considered in \cite{OU18b},
see \secref{sec:comtranspaces}.

The approach  is inspired by the papers \cite{Lan64} and \cite{AAG08}.


\section{Preliminaries}

In this section we briefly recall some necessary background
in the theory of Schwartz distributions on $\R$
(see \cite{Rud91} for more details).

The \emph{Schwartz space}  $\S(\R)$ consists of all infinitely smooth
functions $\varphi$ on $\R$ such that for each $n,k \geq 0$, the seminorm
\begin{equation}
\|\varphi\|_{n,k} := \sup_{x \in \R} (1+|x|)^n  |\varphi^{(k)}(x)|
\end{equation}
is finite.  It is a topological linear space whose topology is
induced by the metric
\begin{equation}
\label{eq:cinfmetric}
d(\varphi, \psi) := \sum_{n,k \ge 0}
2^{-(n+k)} \frac{  \| \varphi - \psi \|_{n,k} }{1 + \| \varphi - \psi \|_{n,k}}
\end{equation}
which also makes $\S(\R)$ a complete, separable metric space.

A \emph{temperate distribution} on $\R$ is a 
linear functional  on the Schwartz space   $\S(\R)$
which is continuous with respect to the 
metric \eqref{eq:cinfmetric}.
We use  $\alpha(\varphi)$ to denote  the action of
a  temperate distribution $\alpha$ on a
Schwartz function $\varphi$.

If $\varphi$ is a Schwartz function  then its Fourier transform
is defined by
\begin{equation}
\ft{\varphi}(x) = \int_{\R} \varphi(t) e^{-2\pi i x t}  dt.
\end{equation}
If $\alpha$ is a temperate distribution then
its Fourier transform is defined by 
$\ft{\alpha}(\varphi) = \alpha(\ft{\varphi})$.

We denote by $\supp(\alpha)$ the closed support of a
temperate distribution $\alpha$.

If $\alpha$ is a temperate distribution with compact
support, then $\alpha(\varphi)$ is well defined
for every smooth function $\varphi$ on $\R$.
In this case $\ft{\al}$
is an infinitely smooth function  given by
$\ft{\al}(x) = \al(e_{-x})$, $x \in \R$,
where here we denote
$e_{x}(t) := e^{2\pi i x t}$.

If $\alpha$ is
a temperate distribution  and if
$\varphi$  is a 
Schwartz function, then 
the product $\alpha \cdot \varphi$ is 
a temperate distribution   defined
by $(\alpha \cdot \varphi)(\psi) =
\alpha(\varphi \cdot \psi)$,
$\psi \in \S(\R)$.
In this case we have 
$\supp(\alpha \cdot \varphi) \sbt
\supp(\alpha) \cap \supp(\varphi)$.

The convolution
$\alpha \ast \varphi$ 
of a temperate distribution  
 $\alpha$ and 
a Schwartz function $\varphi$ is 
 an infinitely smooth function
which is also
a temperate distribution, and
whose Fourier transform is $\ft{\al} \cdot \ft{\varphi}$.
If $\varphi$ has compact support, then
$\supp(\al \ast \varphi)$ is contained in the
 Minkowski sum 
 $\supp(\al) + \supp(\varphi)$.


\section{Landau's complete system of exponentials}

The starting point for our approach
is H.\ Landau's paper \cite{Lan64}.
The paper  is concerned with
the completeness of a system of exponential functions
in the space
$L^p(\Om)$, $1 \le p < \infty$, or $C(\Om)$,
where $\Om$ is an interval, or a finite union of intervals,
in $ \R$.

By classical theory it is known that if $\{\lam_n\}$, $n \in \Z$, 
is a ``regular'' sequence, i.e.\ 
\begin{equation}
\label{eq:landistint}
\sup_{n \in \Z} |\lam_n - n| < +\infty,
\end{equation}
then the system of exponential functions 
$\{e^{2 \pi i \lam_n t}\}$, $n \in \Z$,
is complete on any interval   of length $<1$, 
while it  is incomplete on any interval  of length $>1$
(see e.g. \cite[Section 4.7]{OU16}). These results are
based on the theory of entire functions of exponential type.

It was discovered by Landau that 
 the situation is quite different if the single interval is 
 replaced by a finite union of intervals. 
 The following result was proved in \cite{Lan64}.

\begin{thm}[{\cite{Lan64}}]
\label{thm:landaucmpthm}
Given   $\eps > 0$ there is  a real sequence $\{\lam_n\}_{n=1}^{\infty}$ with
$|\lam_n   - n| < \eps$, such that 
for any finite union of intervals of the form
\begin{equation}
\label{eq:omland}
\Om_{h,K} :=  \bigcup_{|k| \le K} [k + h, k + 1 - h],
\qquad \text{($K$ finite, $0<h<1/2$)}
\end{equation}
and for any $N$, the system
$\{ e^{2 \pi i \lam_n t} \}$, $n > N$,
is complete in the space $C(\Om_{h,K})$.
\end{thm}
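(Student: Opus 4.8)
The plan is to argue by duality. By the Hahn--Banach theorem together with the Riesz representation theorem, the system $\{e^{2\pi i \lam_n t}\}$, $n>N$, is complete in $C(\Om_{h,K})$ if and only if there is no nonzero finite complex Borel measure $\mu$ supported on $\Om_{h,K}$ with $\ft{\mu}(\lam_n)=0$ for every $n>N$. Thus the whole task is to produce a single sequence $\{\lam_n\}$, with $|\lam_n-n|<\eps$, which is a \emph{uniqueness set} for such measures, simultaneously for all admissible $h$, $K$ and $N$. First I would fix a hypothetical annihilating measure $\mu$ and study its Fourier transform $F=\ft{\mu}$, an entire function of exponential type at most $2\pi(K+1)$, bounded on $\R$, that vanishes at every $\lam_n=n+\al_n$ with $n>N$.

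The next step is to exploit the band structure of $\Om_{h,K}$ by folding to the torus. Writing each point of the $m$-th band as $t=m+s$ with $s\in[h,1-h]$, and letting $\mu_m$ denote the corresponding piece of $\mu$ transported to $[h,1-h]$, one has
\[
\ft{\mu}(\lam_n)=\sum_{|m|\le K} e^{-2\pi i \al_n m}\,\ft{\mu_m}(\lam_n),
\]
since $e^{-2\pi i n m}=1$. Each $\ft{\mu_m}$ is now entire of exponential type $\pi(1-2h)<\pi$ and bounded on $\R$. The elementary single--band fact I would use is that a nonzero function of exponential type $<\pi$ bounded on $\R$ cannot vanish on a subset of the integers of density exceeding $1-2h$. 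When all perturbations coincide ($\al_n\equiv\theta$) the displayed identity collapses to a statement about the single folded measure $\sum_m e^{-2\pi i\theta m}\mu_m$ on $[h,1-h]$, and the perturbations $\al_n$ are precisely the device that must \emph{separate} the $2K+1$ bands: the weights $e^{-2\pi i\al_n m}$ should play the role of distinct nodes in a Vandermonde system, so that vanishing of enough folded combinations forces each $\mu_m$, and hence $\mu$, to vanish.

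Accordingly, I would construct $\{\lam_n\}$ by a single inductive (diagonal) procedure ranging over the countably many pairs $(h,K)$ (say with $h$ rational) and all $N$: at each stage one chooses the next perturbation $\al_n\in(-\eps,\eps)$ so as to ``defeat'' the current candidate annihilator, using the exponential--type uniqueness above to guarantee that genuine progress is made and that in the limit no nonzero annihilating measure survives for any $\Om_{h,K}$. The same construction should make the conclusion stable under deleting any finite initial segment, since removing finitely many nodes changes neither the exponential type nor the density of zeros available to $F$.

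The step I expect to be the main obstacle is the quantitative band separation compatible with density one and with $|\al_n|<\eps$. A naive partition of $\{n>N\}$ into $2K+1$ blocks on which $\al_n$ is constant, each block carrying its own single--band uniqueness, fails as soon as $(2K+1)(1-2h)>1$, because $2K+1$ disjoint blocks of density $>1-2h$ cannot coexist inside a density--one sequence. The separation must therefore use the \emph{coupling} across all $n$ at once, namely that the numbers $\ft{\mu_m}(\lam_n)$ are heavily oversampled values of one and the same function of sub--$\pi$ exponential type, rather than independent per--band information. Turning this coupling into a genuine uniqueness set, uniformly over all unions $\Om_{h,K}$ and with arbitrarily small prescribed perturbations, is the delicate heart of the argument.
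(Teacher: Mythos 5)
Your skeleton --- duality against measures on $\Om_{h,K}$, folding the bands to $[h,1-h]$, the single-band uniqueness for functions of exponential type $\pi(1-2h)<\pi$, and the perturbations acting as distinct Vandermonde nodes separating the $2K+1$ bands --- is exactly the skeleton of Landau's argument, which the paper summarizes right after the statement of the theorem. But the step at which you abandon this route contains a genuine error, and it is precisely the step the real proof stands on. You claim the ``naive'' partition fails because ``$2K+1$ disjoint blocks of density $>1-2h$ cannot coexist inside a density-one sequence.'' That additivity obstruction is valid for natural (or lower) density, but not for \emph{upper} density: one can partition the positive integers into countably many pairwise disjoint sets $S_1,S_2,\dots$ \emph{each} of upper density one, i.e.\ $\limsup_{k\to\infty} \#(S_r\cap[1,k])/k=1$ for every $r$ (for instance, distribute the blocks $(j!,\,(j+1)!]$ among the $S_r$ so that each $r$ receives infinitely many of them). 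And upper density is all the single-band lemma needs: a nonzero entire function of exponential type $\sigma<\pi$ bounded on $\R$ belongs to the Cartwright class, so by Levinson's theorem its zeros in the right half-plane have an actual \emph{limiting} density, at most $\sigma/\pi<1$; hence it cannot vanish on a set of upper density $1$. Consequently, vanishing of $\ft{\mu}$ on $\{n+\theta_r : n\in S_r,\ n>N\}$ already forces each folded measure $\nu_{\theta_r}=\sum_{|m|\le K}e^{-2\pi i \theta_r m}\mu_m$ to vanish, for every $r$, and then any $2K+1$ of the nodes $\theta_r$, distinct modulo $1$, give the invertible Vandermonde system killing every $\mu_m$.

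So the construction you dismissed is the correct one, and it is the one described in the paper: partition $\Z^+$ into disjoint sets $S_r$ of upper density one, pick $\theta_r\in(-\eps,\eps)$ distinct modulo $1$, and set $\lam_n:=n+\theta_r$ for $n\in S_r$. The partition does not depend on $h$, $K$ or $N$; each $\Om_{h,K}$ only uses finitely many of the relations $\nu_{\theta_r}=0$; and stability under removing $n\le N$ comes for free, since deleting finitely many elements does not change the upper density of any $S_r$. Your fallback --- an inductive/diagonal choice of the $\al_n$ over all $(h,K,N)$ ``defeating the current candidate annihilator'' --- is left entirely unexecuted (and it is unclear how one would enumerate candidate annihilating measures so as to make such a scheme close up), so as written the proposal does not contain a proof; the repair is to recognize that the density obstruction motivating that detour is illusory.
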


We note that a  set of the form \eqref{eq:omland} can have arbitrarily large measure.

The sequence $\{\lam_n\}_{n=1}^{\infty}$ in \thmref{thm:landaucmpthm}
is constructed as follows. First, the positive integers are partitioned
into pairwise disjoint subsets $\{S_r\}_{r=1}^{\infty}$
such that
\begin{equation}
\limsup_{k \to +\infty} \frac{ \# (S_r \cap [1, k]) }{k} = 1,
\quad r=1,2,\dots.
\end{equation}
Now, let $\{\theta_r\}_{r=1}^{\infty}$ be real numbers
satisfying $|\theta_r| < \eps$, and for each positive integer $n$, define
$\lam_n := n + \theta_r$ if  $n \in S_r$. 
It is shown in \cite{Lan64} that there is 
a choice of  the numbers $\{\theta_r\}$, such that the sequence
$\{\lam_n\}_{n=1}^{\infty}$ 
 thus obtained satisfies the conclusion in
\thmref{thm:landaucmpthm}.
(In fact, it suffices that the numbers $\{\theta_r\}$
be distinct modulo $1$).

By duality, Landau's result can be equivalently stated as a uniqueness 
result for (complex) measures $\mu$ whose  support lies in a set
 of the form \eqref{eq:omland} and whose Fourier transform 
 $\ft{\mu}$ vanishes on the set $\{- \lam_n\}$, $n > N$.
  We will need the following
 slightly more general version of the result,
 where ``measure'' is replaced by ``distribution''. 

\begin{cor}
\label{cor:landauuniqset}
Given   $\eps > 0$, let
$\{\lam_n\}_{n=1}^{\infty}$ 
be the sequence from \thmref{thm:landaucmpthm}.
Let $\alpha$ be a temperate
 distribution on $\R$ whose support 
 lies in a set $\Om_{h,K}$ of the form \eqref{eq:omland},
and assume that there is $N$ such that
$  \ft{\alpha}(- \lam_n) = 0$, $n>N$.
Then $\alpha  = 0$.
\end{cor}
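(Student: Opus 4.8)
The plan is to deduce the distributional statement from the measure version of Landau's result noted just above the corollary, by a mollification argument. Fix a standard approximate identity: for small $\eta > 0$ let $\varphi_\eta \in \S(\R)$ be a smooth function with $\varphi_\eta \ge 0$, $\int_\R \varphi_\eta = 1$, and $\supp(\varphi_\eta) \sbt [-\eta, \eta]$. Set $g_\eta := \alpha \ast \varphi_\eta$. By the facts recalled in the preliminaries, $g_\eta$ is an infinitely smooth function which is also a temperate distribution, its support satisfies $\supp(g_\eta) \sbt \supp(\alpha) + \supp(\varphi_\eta)$, and its Fourier transform is $\ft{g_\eta} = \ft{\alpha} \cdot \ft{\varphi_\eta}$.

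First I would check that $g_\eta$ lives on a set of the same form \eqref{eq:omland}. Since each interval satisfies $[k+h, k+1-h] + [-\eta, \eta] = [k + (h-\eta), \, k + 1 - (h-\eta)]$, and for $\eta < h$ these expanded intervals remain pairwise disjoint (the gap $2(h-\eta)$ stays positive), we get $\supp(g_\eta) \sbt \Om_{h-\eta, K}$, which is again of the form \eqref{eq:omland} with $0 < h - \eta < 1/2$. Next, the Fourier vanishing is inherited automatically: for $n > N$ we have $\ft{g_\eta}(-\lam_n) = \ft{\alpha}(-\lam_n)\, \ft{\varphi_\eta}(-\lam_n) = 0$, using the hypothesis $\ft{\alpha}(-\lam_n) = 0$, and irrespective of the values of $\ft{\varphi_\eta}$.

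Now I would invoke duality. The continuous function $g_\eta$ is supported on the compact set $\Om_{h-\eta,K}$, so $g_\eta(t)\,dt$ is a (finite complex) measure supported there, and $\int_\R e^{2\pi i \lam_n t} g_\eta(t)\,dt = \ft{g_\eta}(-\lam_n) = 0$ for $n > N$. Thus the functional $\psi \mapsto \int \psi\, g_\eta\, dt$ on $C(\Om_{h-\eta,K})$ annihilates every exponential $e^{2\pi i \lam_n t}$, $n > N$; since these exponentials are complete in $C(\Om_{h-\eta,K})$ by \thmref{thm:landaucmpthm}, Hahn–Banach forces the functional, hence the measure $g_\eta\, dt$, to vanish, so $g_\eta \equiv 0$. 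Therefore $\alpha \ast \varphi_\eta = 0$ for every sufficiently small $\eta$. Finally, letting $\eta \to 0$, the mollifications converge $\alpha \ast \varphi_\eta \to \alpha$ in $\S'(\R)$, whence $\alpha = 0$.

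The two steps requiring care — and which I expect to be the main obstacle — are keeping the support of the mollification inside a set of the prescribed form \eqref{eq:omland} (resolved by shrinking $h$ to $h - \eta$ with $\eta < h$), and the passage from a distribution to a continuous density, which is exactly what the convolution with $\varphi_\eta$ buys us so that the measure-theoretic duality of \thmref{thm:landaucmpthm} becomes applicable; the concluding limit $\alpha \ast \varphi_\eta \to \alpha$ is the standard regularization of a compactly supported temperate distribution.
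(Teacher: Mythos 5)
Your proposal is correct and follows essentially the same route as the paper's own proof: mollify $\alpha$ by an approximate identity supported in $[-\eta,\eta]$ with $\eta<h$, observe the mollification is a smooth function supported in $\Om_{h-\eta,K}$ annihilating the exponentials $\{e^{2\pi i \lam_n t}\}$, $n>N$, invoke the completeness in $C(\Om_{h-\eta,K})$ from \thmref{thm:landaucmpthm} to conclude it vanishes, and let $\eta \to 0$. The only differences are cosmetic (the paper scales a fixed bump $\chi_\del(t)=\del^{-1}\chi(\del^{-1}t)$ and cites Rudin for the limit, while you spell out the duality and support bookkeeping explicitly).
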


\begin{proof}
Let $\chi$ be a smooth nonnegative function,
 $\supp(\chi) \sbt [-1,1]$,
 $\int \chi = 1$, and define $\chi_\del(t) := \del^{-1} \chi(\del^{-1}  t)$.
The convolution $\alpha_\del :=  \alpha \ast \chi_\del$ is 
a measure (in fact, a  smooth function) 
supported on the set $ \Om_{h-  \del, K}$
and annihilating the system
$\{ e^{2 \pi i \lam_n t} \}$, $n > N$.
By the completeness of  the system in the
space  $C(\Om_{ h-  \del, K})$  it follows that
$\alpha_\del = 0$ for $0<\del<h$.
 Letting $\del \to 0$, we obtain $\al = 0$
(see \cite[Theorem 6.32]{Rud91}).
\end{proof}


\section{Completeness of weighted exponentials}
\label{sec:comreal}

\subsection{}
The next step in our approach is 
to construct a system of 
\emph{weighted exponential functions} 
with ``almost integer'' frequencies, 
which is complete in  a certain
 space of smooth functions on $\R$ 
that will be now introduced.

\begin{definition}
The space $I_0(\R)$ is defined to be 
the closed   subspace 
 of  the Schwartz space $\S(\R)$, that consists
of the functions $f \in \S(\R)$  satisfying 
\begin{equation}
\label{eq:iocrit}
f^{(j)}(n) = 0, \quad n \in \Z, \quad j=0,1,2,\dots.
\end{equation}
\end{definition}

The space $I_0(\R)$ is then a 
complete, separable metric space
with the metric inherited from the Schwartz space.

We will use Landau's \thmref{thm:landaucmpthm} in order
to prove the following result:

\begin{thm}
\label{thm:landaucomp2}
There is  a real sequence $\{\lam_n\}_{n=1}^{\infty}$ with
$\lam_n  = n + o(1)$, and there is a function $\varphi \in I_0(\R)$,
such that for any $N$ the system 
\begin{equation}
\label{eq:phiexpsyslam2}
\{\varphi(t) e^{2 \pi i \lam_n t}\}, \; n>N,
 \end{equation}
is complete in the space $I_0(\R)$.
\end{thm}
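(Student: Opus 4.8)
The plan is to argue by duality. Since $I_0(\R)$ is a closed subspace of $\S(\R)$, the Hahn--Banach theorem lets me represent every continuous linear functional on $I_0(\R)$ by a temperate distribution $\al$, and the completeness of the system \eqref{eq:phiexpsyslam2} is equivalent to the statement that whenever $\al(\varphi\, e_{\lam_n}) = 0$ for all $n>N$, the functional induced by $\al$ on $I_0(\R)$ vanishes. So the first step is to identify the annihilator of $I_0(\R)$ in $\S'(\R)$: a temperate distribution kills every $f\in I_0(\R)$ exactly when $\supp(\al)\sbt\Z$. Indeed, any smooth compactly supported function disjoint from $\Z$ already lies in $I_0(\R)$, which forces $\al$ to vanish off $\Z$; conversely a distribution supported on $\Z$ acts through the derivatives $f^{(j)}(n)$, which vanish by \eqref{eq:iocrit}. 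The goal is thus reduced to showing that $\al(\varphi\, e_{\lam_n})=0$ for $n>N$ implies $\supp(\al)\sbt\Z$.

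Next I would take $\{\lam_n\}$ to be the sequence of \thmref{thm:landaucmpthm} (for a small $\eps$), and construct a Schwartz function $\varphi\in I_0(\R)$ that vanishes to infinite order exactly on $\Z$ and is nonvanishing on $\R\setminus\Z$; an explicit choice such as $\varphi(t)=e^{-t^2}\exp(-1/\sin^2(\pi t))$ works, and $\varphi\, e_{\lam_n}\in I_0(\R)$ by the Leibniz rule. Membership $\varphi\in I_0(\R)$ \emph{forces} the flatness of $\varphi$ along $\Z$, while the nonvanishing off $\Z$ is what makes the reduction go: writing $\be:=\varphi\cdot\al$, the hypothesis becomes $\be(e_{\lam_n})=0$ for $n>N$, and since $\varphi$ is smooth and nonzero on $\R\setminus\Z$, it suffices to prove that $\be=0$ on $\R\setminus\Z$ — dividing by $\varphi$ locally then yields $\al=0$ there, i.e.\ $\supp(\al)\sbt\Z$.

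To extract this I would like to invoke \corref{cor:landauuniqset}: reading $\be(e_{\lam_n})=0$ as $\ft{\be}(-\lam_n)=0$, the Corollary gives $\be=0$ as soon as $\be$ is compactly supported in some $\Om_{h,K}$ of the form \eqref{eq:omland}. This is where the \emph{main obstacle} lies. The distribution $\be=\varphi\al$ is neither compactly supported nor supported away from the integers — because $\varphi$, lying in $I_0(\R)$, vanishes at the integers yet is nonzero on each whole component $(k,k+1)$ — so the Corollary does not apply to it directly. Nor can one simply localize by a cutoff, since the condition $\be(e_{\lam_n})=0$ is a global pairing against the exponentials and is destroyed by truncation. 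The tension is unavoidable: had $\varphi$ been supported in the gaps $\bigcup_k[k+h,k+1-h]$, a nonzero $\al$ concentrated near an integer would annihilate the entire system without being supported on $\Z$, so completeness in $I_0(\R)$ genuinely forces $\varphi$ to be nonvanishing off $\Z$, and hence forces $\be$ to carry this inconvenient support.

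The resolution I would pursue is to reduce the vanishing of $\be$ on $\R\setminus\Z$ to the compactly supported problems the Corollary does handle, namely $\be=0$ on each fixed $\Om_{h,K}$, and to feed in the precise structure of Landau's construction, $\lam_n=n+\theta_r$ for $n\in S_r$ with the classes $S_r$ of upper density one and the $\theta_r$ distinct modulo $1$. Using $e_{\lam_n}=e_{\theta_r}\,e_n$, the annihilation over $n\in S_r$ translates into the vanishing of a density-one set of Fourier coefficients of the $1$-periodic distribution obtained by folding $e_{\theta_r}\be$; the combination of these conditions across all classes $r$ is exactly the content packaged by \corref{cor:landauuniqset} on a union of intervals. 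A smoothing-and-limiting argument in the spirit of the proof of that Corollary would then transfer the uniqueness to $\be$ itself. The technical heart is to make this transfer rigorous: one must control the contributions coming from small neighborhoods of the integers uniformly — here the infinite-order flatness of $\varphi$ at $\Z$ should be decisive — so that they do not interfere with the Fourier vanishing as the localization is refined.
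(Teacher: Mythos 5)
Your duality reduction (annihilator of $I_0(\R)$ $=$ temperate distributions supported on $\Z$) is correct, and you have correctly identified the central tension of the problem. But the proposal has two genuine gaps, and the second is precisely the point where the paper's proof takes a completely different route.

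\emph{First gap: the $o(1)$ condition.} You take $\{\lam_n\}$ directly from \thmref{thm:landaucmpthm}. That sequence satisfies only $|\lam_n - n|<\eps$, never $\lam_n = n + o(1)$: Landau's construction sets $\lam_n = n + \theta_r$ for $n \in S_r$, each class $S_r$ is infinite, and the $\theta_r$ are distinct modulo $1$, so at most one of them vanishes and $\lam_n - n \not\to 0$. Thus even if the rest of your argument were completed, you would have proved only the weaker statement \lemref{lem:landaucomp1}, not \thmref{thm:landaucomp2}; the paper states this distinction explicitly right after \lemref{lem:landaucomp1}. To get $o(1)$ the paper glues Landau sequences with $\eps = k^{-1}$ over disjoint blocks $N_k < n < N_{k+1}$ and runs an induction: at step $k$ it picks $\varphi_k$ and a polynomial $P_k$ with frequencies $\lam^{(k)}_n$, $N_k<n<N_{k+1}$, so that $d(\varphi_k\cdot P_k,\chi_k)<k^{-1}$ while $d(\varphi_k,\varphi_{k-1})\le 2^{-k}$ and $d(\varphi_k\cdot P_l,\varphi_{k-1}\cdot P_l)\le k^{-1}2^{-k}$ for $l<k$; the limit $\varphi$ then works simultaneously for all blocks. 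None of this mechanism (or any substitute for it) appears in your proposal.

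\emph{Second gap: the uniqueness step.} You fix an explicit $\varphi$ nonvanishing off $\Z$, so that $\be=\varphi\cdot\al$ has unbounded support meeting every interval $(k,k+1)$, and \corref{cor:landauuniqset} does not apply --- as you say yourself. Your proposed repair (periodize $e_{\theta_r}\be$, use density-one vanishing of Fourier coefficients, then smooth and take limits) is exactly ``the technical heart'' and is left unproven; it also faces real obstructions: the pairing $\be(e_{\lam_n})$ is not defined intrinsically for non-compactly supported $\be$ (only through $\al(\varphi e_{\lam_n})$), folding a temperate distribution needs summability it may not have, and Landau's theorem gives nothing uniform as $K\to\infty$ and $h\to 0$. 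The paper never confronts this problem, because it does not prove completeness for its $\varphi$ by an annihilator argument at all. In \lemref{lem:landaucomp1} the uniqueness tool \corref{cor:landauuniqset} is applied only to auxiliary functions $\varphi = q\cdot\eta \in J_0(\R)$ --- compactly supported and vanishing near $\Z$, so that $\al\cdot\varphi$ \emph{is} supported in some $\Om_{h,K}$ --- and these serve solely to prove that the sets $G(\chi,\rho,N)$ are dense. The final $\varphi$ is then generic, taken from the residual set $\bigcap_{j,k,N} G(\chi_j,k^{-1},N)$, and its completeness follows from the approximation property $d(\varphi\cdot P,\chi_j)<k^{-1}$ defining those sets, not from any uniqueness theorem applied to $\varphi\cdot\al$. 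This Baire-category detour is exactly how the paper resolves the tension you noticed (completeness forces $\varphi\ne 0$ off $\Z$, yet the uniqueness tool needs support away from $\Z$); your plan, which insists on an explicit nonvanishing $\varphi$ and a direct uniqueness argument, runs head-on into it and does not get through.
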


The proof will be done in several steps.

\subsection{}
We use $J_0(\R)$ to denote the linear  space consisting of
all the smooth, compactly supported functions  on $\R$
which  vanish in a neighborhood of $\Z$.

\begin{lem}
\label{lem:j0dense}
$J_0(\R)$ is a dense subspace of $I_0(\R)$.
\end{lem}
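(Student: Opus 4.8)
The plan is to verify the trivial inclusion $J_0(\R) \subset I_0(\R)$ and then establish density by two successive approximations, each controlled in every Schwartz seminorm $\|\cdot\|_{n,k}$; this suffices because convergence in the metric \eqref{eq:cinfmetric} is equivalent to convergence in each seminorm. The inclusion is immediate: any function in $J_0(\R)$ is smooth and compactly supported, hence Schwartz, and since it vanishes identically on a neighborhood of each integer, all its derivatives vanish at the integers, so it lies in $I_0(\R)$.

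First reduction (truncation). Given $f \in I_0(\R)$, I would first approximate it by a compactly supported element of $I_0(\R)$. Fix a smooth $\eta$ with $\eta \equiv 1$ on $[-1,1]$ and $\supp(\eta) \sbt [-2,2]$, and set $\eta_R(x) := \eta(x/R)$. The product $f \eta_R$ is smooth and compactly supported; since $\eta_R \equiv 1$ on $[-R,R]$, the Leibniz rule shows every derivative of $f\eta_R$ at an integer $n$ is a finite combination of the vanishing quantities $f^{(i)}(n)$, so $f\eta_R \in I_0(\R)$. The difference $f(1-\eta_R)$ is supported in $\{|x|\ge R\}$, and combining the rapid decay of $f$ and its derivatives with the bounds $|\eta_R^{(j)}| \le C_j R^{-j}$ yields $\|f - f\eta_R\|_{n,k} \to 0$ as $R \to \infty$ for all $n,k$. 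Thus the compactly supported members of $I_0(\R)$ are dense in it.

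Second reduction (excising the integers). It remains to approximate a fixed compactly supported $g \in I_0(\R)$ by functions vanishing near $\Z$. For small $\eps>0$, choose a smooth cutoff $\psi_\eps$ with $0 \le \psi_\eps \le 1$, equal to $0$ on $\bigcup_n [n-\eps,n+\eps]$ and to $1$ off $\bigcup_n [n-2\eps,n+2\eps]$, satisfying $|\psi_\eps^{(j)}| \le C_j \eps^{-j}$. Then $g\psi_\eps \in J_0(\R)$, and $g(1-\psi_\eps)$ is supported on the $2\eps$-neighborhoods of the finitely many integers meeting $\supp(g)$. The crucial point, where the definition of $I_0(\R)$ enters, is that $g$ vanishes to infinite order at each such integer $n$, so Taylor's theorem gives $|g^{(i)}(x)| \le C_{i,M}\,|x-n|^{M}$ for every $M$. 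Applying the Leibniz rule to $g(1-\psi_\eps)$ and pairing each factor $\psi_\eps^{(j)}$ (of size $\eps^{-j}$) with $g^{(k-j)}$ (of size $\eps^{M}$, choosing $M=k+1$) makes every term $O(\eps)$; as $(1+|x|)^n$ is bounded on the fixed compact support, $\|g - g\psi_\eps\|_{n,k} = O(\eps)$ for all $n,k$, so $g\psi_\eps \to g$ in $\S(\R)$.

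I expect the main obstacle to be precisely this last estimate. The cutoff $\psi_\eps$ necessarily has derivatives growing like $\eps^{-k}$, so the approximation can succeed only because the infinite-order vanishing of $g$ at the integers forces $g$ and all its derivatives to be smaller there than any fixed power of $\eps$, thereby overwhelming the derivative growth of $\psi_\eps$. Making this balance quantitative and uniform in all seminorms is the heart of the argument.
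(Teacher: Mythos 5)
Your proof is correct and follows essentially the same route as the paper: first reduce to compactly supported elements of $I_0(\R)$ via a dilated cutoff, then excise small neighborhoods of the integers, using the infinite-order vanishing of $g$ there to beat the $\eps^{-j}$ blow-up of the cutoff's derivatives. The only difference is one of detail, not method: the paper treats one integer at a time and leaves the key estimate as ``straightforward to check,'' whereas you excise all integers at once and make the estimate quantitative via Taylor's theorem.
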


\begin{proof}
We choose a smooth, compactly supported
 function $\chi$ on $\R$ satisfying
$\chi(t)=1$ on $[-1,1]$, and define
$\chi_r(t) := \chi(rt)$. First, 
assume that a smooth, compactly supported
function $f$ has a zero of infinite order
at the origin. It is then straightforward to check 
that $\|f \cdot \chi_r\|_{n,k} \to 0$
as $r \to +\infty$ for every $n,k$,
hence $f \cdot (1 - \chi_r) \to f$
as $r \to +\infty$ in the Schwartz space metric.
This shows that
 $f$ is a  limit of smooth, compactly supported
  functions  vanishing in a neighborhood of the origin.
By a similar argument, any compactly supported
function $f \in I_0(\R)$ is a limit 
of functions  belonging to $J_0(\R)$.
So to conclude the proof, it would suffice
to show that the 
compactly supported functions in $I_0(\R)$
form a dense set in $I_0(\R)$. Indeed,
if $f \in I_0(\R)$ then 
$f \cdot \chi_r$ is  
compactly supported, belongs to $I_0(\R)$
and $f \cdot \chi_r \to f$ as $r \to 0$
(see \cite[Theorem 7.10]{Rud91}).
\end{proof}

\subsection{}
We will next prove the following 
version of \thmref{thm:landaucomp2}.

\begin{thm}
\label{thm:landaucomp1}
Given   $\eps > 0$, let
$\{\lam_n\}_{n=1}^{\infty}$ 
be the sequence from \thmref{thm:landaucmpthm}.
Then there is a function $\varphi \in I_0(\R)$,
such that for any $N$ the system 
\eqref{eq:phiexpsyslam2} 
is complete in the space $I_0(\R)$.
Moreover the set of such functions $\varphi$
is residual  in $I_0(\R)$.
\end{thm}

We note that the sequence
$\{\lam_n\}_{n=1}^{\infty}$ in this result
is uniformly discrete, but does not satisfy the condition 
$ \lam_n = n +  o(1)$, so \thmref{thm:landaucomp2}
does not follow directly.

The proof  below is based on the method from \cite[Section 2]{AAG08}.

\begin{proof}[Proof of \thmref{thm:landaucomp1}]
For each   $\chi \in I_0(\R)$ and each $N$ and $\rho>0$,
let $G(\chi, \rho, N)$ be the set of all functions
$\varphi \in I_0(\R)$ such that there is a polynomial
\begin{equation}
\label{eq:polyplamn}
P(t) = \sum_{n > N} c_n e^{2 \pi i \lam_n t}
\end{equation}
satisfying $d(\varphi \cdot P, \chi) < \rho$.
We claim that $G(\chi, \rho, N)$ is an open, dense 
set in $I_0(\R)$.

To see that $G(\chi,\rho,N)$ is an open set, 
we observe that 
multiplication by a polynomial $P$ is a continuous mapping 
$I_0(\T) \to I_0(\T)$, hence
$\{ \varphi  \in I_0(\T) :  d(\varphi \cdot P, \chi) < \rho\}$
is an open set for every polynomial $P$.
It follows that $G(\chi, \rho, N)$, being  a union of sets of this form
as $P$ goes through all polynomials \eqref{eq:polyplamn}, 
is an open set.

Next we show that the set
$G(\chi,\rho,N)$ is   dense 
 in $I_0(\R)$. To establish this we
fix $f \in I_0(\R)$ and $\delta>0$, and show
that there is $\varphi \in G(\chi,\rho,N)$ such that
$d(f,  \varphi) < \delta$.

Since $J_0(\R)$ is a dense subspace of $I_0(\R)$, we can find
two  smooth functions 
$\chi_0$ and $f_0$  on $\R$, 
 both with compact support and
 vanishing in a neighborhood of $\Z$, such that
$d(\chi, \chi_0 ) < \rho/2$ and 
$d(f, f_0) < \del/2$. 
We  choose $R$ sufficiently large and
 $h>0$ sufficiently small, such that
 $\chi_0$ and $f_0$ are both supported on $[-R,R]$
and both vanish on the set $\Z + (-2h, 2h)$.
We also choose 
a smooth, compactly supported function $\eta$ on $\R$
vanishing on the set $\Z + (-h, h)$ and
such that $\eta(t) = 1$ on $[-R,R] \setminus
(\Z + (-2h, 2h))$.

We claim that there is a Schwartz function $q$ 
such that
$d(f_0, q \cdot \eta)  < \del / 2$
and moreover, $q$ has no zeros on $[-R,R]$.
Indeed, the fact that $\eta(t) = 1$
on $\supp(f_0)$ implies that
$f_0  =  f_0 \cdot \eta$, hence
by approximating $f_0$ in the Schwartz space
 by a nonzero smooth function  $g$ such that
$\ft{g}$ has compact support, we obtain
 $d(f_0, g \cdot \eta)  < \del / 2$.
Observe that the function $g$ admits a holomorphic
extension to the complex plane, given by
\begin{equation}
g(z) := \int \ft{g}(u) e^{2 \pi i u z} du, \quad z \in \C,
\end{equation}
and $g$ being nonzero has only finitely many zeros
in the closed rectangle given by $|\Re(z)| \le R$ and 
$|\Im(z)| \le 1$. We then
 define $g_\sig(t) := g(t + i \sig)$
 and observe that
 $g_\sig \to g$ as $\sig \to 0$ in the Schwartz space,
so the function $q  := g_\sig$
satisfies the required properties
provided that $\sig>0$ is small enough.

Let $\varphi := q \cdot \eta $, then
$\varphi \in I_0(\R)$ (in fact, $\varphi \in J_0(\R)$)  and
\begin{equation}
d(f,  \varphi) \le d(f,   f_0) +  d(f_0,   q \cdot \eta)
< \del/2 +  \del/2 = \del.
\end{equation}

We claim that
$\varphi \in G(\chi,\rho,N)$. Indeed,
 suppose that $\alpha$  is a  temperate distribution on $\R$ which
 annihilates   the system \eqref{eq:phiexpsyslam2}.
  This means that  the distribution
$\alpha \cdot \varphi$ satisfies 
 $(\alpha \cdot \varphi)^{\wedge}(- \lam_n) = 0$ 
for all $n > N$. 
Since $\varphi$ is compactly supported  and vanishes 
on $\Z + (-h, h)$, the distribution
$\alpha \cdot \varphi$ thus satisfies the assumptions 
in \corref{cor:landauuniqset}.  It therefore
follows from \corref{cor:landauuniqset}  that 
$\alpha \cdot \varphi =0$. Since
$\varphi$ has no zeros on $\supp(\chi_0)$, we can write
$\chi_0  =  \varphi \cdot g$ where $g$  is a smooth
 function of compact support. Hence
\begin{equation}
\alpha(\chi_0) = \alpha(\varphi \cdot g ) = 
(\alpha \cdot \varphi)(g) = 0.
\end{equation}
We conclude (see  \cite[Theorem 3.5]{Rud91}) that the function
$\chi_0$ must lie in the closed linear subspace of $I_0(\R)$
spanned by the system \eqref{eq:phiexpsyslam2},
so there is a polynomial
\eqref{eq:polyplamn}
satisfying $d(\varphi \cdot P, \chi_0) < \rho/2$.
In turn this implies that
\begin{equation}
d(\varphi \cdot P, \chi) \le
d(\varphi \cdot P, \chi_0) +
d(\chi, \chi_0) < \rho/2  + \rho/2 = \rho,
\end{equation} 
so that $\varphi \in G(\chi,\rho,N)$.
This establishes that 
$G(\chi,\rho,N)$ is a dense set in $I_0(\R)$.

Finally,   we choose a sequence $\{\chi_{j}\}$
dense in $I_0(\R)$, and consider the set
\begin{equation}
\label{eq:resid}
\bigcap_{j,k,N}^{\infty} G(\chi_j, k^{-1}, N).
\end{equation} 
It is a residual set in $I_0(\R)$, being
the countable intersection of open, dense sets,
and any function  $\varphi$ 
from the set \eqref{eq:resid} 
satisfies the conclusion of \thmref{thm:landaucomp1}.
\end{proof}

\subsection{}
\label{subsec:ss1}
Now we use \thmref{thm:landaucomp1} in order to
establish \thmref{thm:landaucomp2}.

\begin{proof}[Proof of \thmref{thm:landaucomp2}]
Let $\{\chi_k\}_{k=1}^{\infty}$ be a sequence 
which  is  dense in the space $I_0(\R)$.
 We then construct by induction functions $\varphi_k \in I_0(\R)$,
 an increasing sequence of positive integers $\{N_k\}$,
 and trigonometric polynomials 
\begin{equation}
\label{eq:pknkdeflmnk}
P_k(t) = \sum_{N_k < n < N_{k+1}} c_n  e^{2 \pi i \lam^{(k)}_n t} 
\end{equation} 
in the following way. We let $\varphi_0 := 0$ and $N_0 := 0$.

At the $k$'th step of the induction, we invoke
 \thmref{thm:landaucmpthm}  with $\eps = k^{-1}$
and obtain a real sequence
$\{\lam^{(k)}_n\}_{n=1}^{\infty}$ 
with
$|\lam^{(k)}_n   - n| < k^{-1}$, for all $n$.
By  \thmref{thm:landaucomp1}, there exists 
in the space $I_0(\R)$ a dense (in fact, residual)
set of functions $\varphi$,
such that for any $N$ the system 
$\{\varphi(t) e^{2 \pi i \lam^{(k)}_n t}\}$, $n>N$,
is complete in  $I_0(\R)$. This implies that we can 
choose $\varphi_k \in I_0(\R)$ and
a polynomial $P_k$ as in \eqref{eq:pknkdeflmnk}, such that
$d(\varphi_k \cdot P_k, \chi_{k}) < k^{-1}$
 and 
\begin{equation}
\label{eq:phiphikplusB1}
d(\varphi_k, \varphi_{k-1}) \le  2^{-k}, \quad
\max_{1 \le l \le k-1} d(\varphi_k  \cdot P_l, \varphi_{k-1} \cdot P_l ) \le  k^{-1} 2^{-k}.
\end{equation} 
The sequence 
$\{ \varphi_k  \}$ then
converges in the space $I_0(\R)$ to some   
$ \varphi \in I_0(\R)$, and 
\begin{equation}
\label{eq:phkchikapproxB2}
d(\varphi \cdot P_k,  \chi_{k})
\le d(\varphi_k \cdot P_k,  \chi_k)
+   \sum_{l=k+1}^{\infty} d(\varphi_l \cdot P_k,  \varphi_{l-1} \cdot P_k)  < 2 k^{-1}
\end{equation}
for every $k$, due to \eqref{eq:phiphikplusB1}.

The intervals $(N_{k}, N_{k+1})$, $k=1,2,\dots$, are pairwise
disjoint, so each positive integer $n$ may belong to at most
one  of these intervals. For each $n \in
(N_{k}, N_{k+1}) \cap \Z$
we define $\lam_n := \lam^{(k)}_n$, and if $n$ does not
lie in any of the intervals $(N_{k}, N_{k+1})$,
then we set $\lam_n := n$. It follows that
$|\lam_n - n| < k^{-1}$ for $n > N_k$, 
which implies that $\lam_n = n + o(1)$.

Now given any $N$, we  choose a sufficiently large
 $k_0$ such that $N_{k_0} > N$. Then the sequence 
 $\{\chi_{k}\}$, $k > k_0$, is also dense in $I_0(\R)$.
Since we have
$d(\varphi \cdot P_k,  \chi_{k}) < 2 k^{-1}$
according to \eqref{eq:phkchikapproxB2},
 we conclude that the sequence 
$\{ \varphi \cdot P_k \}$,  $k > k_0$, 
is  dense in $I_0(\R)$ as well. But notice that 
$ \varphi \cdot P_k$ belongs to the linear
span of the system \eqref{eq:phiexpsyslam2}
if $k > k_0$,  due to 
 \eqref{eq:pknkdeflmnk}.
 This implies that the system \eqref{eq:phiexpsyslam2}
is complete in $I_0(\R)$.
\end{proof}


\section{Completeness of translates}
\label{sec:comtranspaces}

Now we can prove
\thmref{thm:A1}. Moreover,
 we will prove an extension of this theorem to the wider 
class of Banach function spaces on $\R$ considered in \cite{OU18b}.

\subsection{}
Let $X$ be a Banach function space on $\R$.
Following \cite{OU18b} we assume that
\begin{enumerate-roman}
\item
\label{it:ou1}
 The Schwartz space $\S(\R)$
is continuously and densely embedded in $X$.
\end{enumerate-roman}
In this case every element $\alpha$
of the dual space $X^*$ defines a continuous 
linear functional on $\S(\R)$, so $\alpha$
 is a temperate distribution on $\R$.
 We suppose further that
\begin{enumerate-roman}
\setcounter{enumi}{1}
\item
\label{it:ou2}
If   a temperate distribution $\alpha \in X^*$  satisfies
 $\supp(\ft{\alpha}) \sbt \Z$,
then $\alpha = 0$.
\end{enumerate-roman}
We can now establish the following result
(compare with \cite[Theorem 1]{OU18b}).

\begin{thm}
\label{thm:B1}
There is a real sequence 
$\{\lam_n\}_{n=1}^{\infty}$ satisfying
$ \lam_n = n +  o(1)$,  and there is
a Schwartz function $f$ on $\R$, 
such that for any $N$ the system 
\begin{equation}
\label{eq:ftranslamb1}
\{f(x - \lam_n)\}, \; n>N,
 \end{equation}
is complete in every Banach function space $X$ 
satisfying \ref{it:ou1} and \ref{it:ou2} above.
\end{thm}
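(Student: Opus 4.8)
The plan is to deduce \thmref{thm:B1} from \thmref{thm:landaucomp2} by passing to the Fourier transform, which converts translates of a fixed function into weighted exponentials with a fixed weight. Let $\{\lam_n\}$ and $\varphi \in I_0(\R)$ be as provided by \thmref{thm:landaucomp2}, so that $\lam_n = n + o(1)$ and, for every $N$, the system $\{\varphi(t) e^{2\pi i \lam_n t}\}$, $n>N$, is complete in $I_0(\R)$. I would then \emph{define} the Schwartz function $f := \ft{\varphi}$; since $\varphi \in \S(\R)$, its Fourier transform $f$ is again a Schwartz function. The reason for this choice is that the Fourier transform of $f(x-\lam_n)$ differs from $\ft{f}$ only by the unimodular factor $e^{-2\pi i \lam_n x}$, which will let me rewrite the annihilation condition for the translates in terms of the system \eqref{eq:phiexpsyslam2}.

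To prove completeness I would fix a Banach function space $X$ satisfying \ref{it:ou1} and \ref{it:ou2}, fix $N$, and take an arbitrary $\al \in X^*$ annihilating the system $\{f(x-\lam_n)\}$, $n>N$; the goal is to show $\al = 0$. By \ref{it:ou1}, $\al$ is a temperate distribution, so $\ft{\al}$ is defined. Starting from the definition $\ft{\al}(\psi) = \al(\ft{\psi})$ and choosing $\psi$ with $\ft{\psi} = f(\cdot - \lam_n)$, a short computation using Fourier inversion and $\ft{}^{-1} f = \varphi$ gives the identity
\[
\al\bigl(f(\cdot - \lam_n)\bigr) = \ft{\al}\bigl(\varphi \cdot e_{\lam_n}\bigr), \qquad e_{\lam_n}(t) = e^{2\pi i \lam_n t}.
\]
Thus the hypothesis on $\al$ says precisely that $\ft{\al}(\varphi\, e_{\lam_n}) = 0$ for all $n>N$. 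Because $\varphi \in I_0(\R)$ and multiplication by the smooth function $e_{\lam_n}$ preserves the vanishing of all derivatives on $\Z$, each product $\varphi\, e_{\lam_n}$ again lies in $I_0(\R)$. Hence the restriction of $\ft{\al}$ to the closed subspace $I_0(\R)$ is a continuous linear functional annihilating the complete system \eqref{eq:phiexpsyslam2}, and by \thmref{thm:landaucomp2} this restriction must vanish identically: $\ft{\al}(\psi) = 0$ for every $\psi \in I_0(\R)$.

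It then remains to turn the vanishing of $\ft{\al}$ on $I_0(\R)$ into a support condition and invoke \ref{it:ou2}. Here I would observe that if $x_0 \notin \Z$, any smooth test function supported in a small neighborhood of $x_0$ belongs to $I_0(\R)$, which forces $\ft{\al}$ to vanish near $x_0$; therefore $\supp(\ft{\al}) \sbt \Z$. Feeding this into hypothesis \ref{it:ou2} yields $\al = 0$, proving that the system is complete in $X$; specializing to $X = L^p(\R)$ for $p>1$ recovers \thmref{thm:A1}. The genuinely substantive analysis has already been carried out in \thmref{thm:landaucomp2}, so the main points requiring care here are purely structural: verifying the Fourier identity displayed above, and checking that restricting $\ft{\al}$ to the \emph{closed} subspace $I_0(\R)$ loses no information, so that an annihilator of the complete system really vanishes on all of $I_0(\R)$ and hence is supported on $\Z$. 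I expect these bookkeeping steps, rather than any new estimate, to be the only obstacle.
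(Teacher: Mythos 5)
Your proposal is correct and follows essentially the same route as the paper: define $f = \ft{\varphi}$, use the Fourier identity $\al(f(\cdot-\lam_n)) = \ft{\al}(\varphi\, e_{\lam_n})$, deduce that $\ft{\al}$ vanishes on $I_0(\R)$ and hence $\supp(\ft{\al}) \sbt \Z$, and conclude $\al = 0$ from condition \ref{it:ou2}. The only (cosmetic) difference is that the paper factors the argument as ``the Fourier transform embeds $I_0(\R)$ continuously into $X$, so it suffices to show $\ft{I}_0(\R)$ is dense in $X$,'' whereas you run the annihilator computation directly on the translate system; the two are the same duality argument.
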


\begin{proof}
Consider the sequence $\{\lam_n\}_{n=1}^{\infty}$
and the function $\varphi \in I_0(\R)$ from
\thmref{thm:landaucomp2}.
Let $f := \ft{\varphi}$ which
  is a Schwartz  function on $\R$. 
  Then the Fourier transform maps  the 
weighted exponential system 
$\{\varphi(t) e^{2 \pi i \lam_n t}\}$
onto the system of translates
 $\{f(x - \lam_n)\}$.
  
Let now $X$ be a Banach function space 
satisfying \ref{it:ou1} and \ref{it:ou2}.
The Fourier transform is an isomorphism of
the Schwartz space  (as a  topological linear space)
onto itself, so it
 follows from  \ref{it:ou1} that the Fourier transform 
  embeds  $I_0(\R)$    continuously into $X$.
  Hence, to prove that the system \eqref{eq:ftranslamb1}
  is complete in $X$, 
  it would suffice to show that the image 
$\ft{I}_0(\R)$ of the space  $I_0(\R)$ 
  under the Fourier transform, is a dense subspace of  $X$.
  
  Indeed, let   $\alpha \in X^*$  be
   a temperate distribution   which annihilates   
  $\ft{I}_0(\R)$, that is, 
  $\alpha(\ft{\psi}) = 0$ for every $\psi \in I_0(\R)$.
This means that the Fourier transform 
  $\ft{\alpha}$ annihilates the space $I_0(\R)$,
  which is possible only if $\supp(\ft{\al}) \sbt \Z$. 
  It then follows from \ref{it:ou2} that $\alpha =0$.
  So this establishes  that 
    $\ft{I}_0(\R)$  is a dense subspace of  $X$,
    and proves the theorem. 
\end{proof}

\subsection{}
Finally,  
\thmref{thm:A1} follows as a special case of
\thmref{thm:B1}. Indeed, we have

\begin{prop}
\label{prop:xlp}
The space $X  = L^p(\R)$, $p>1$, 
satisfies conditions \ref{it:ou1} and \ref{it:ou2} above.
\end{prop}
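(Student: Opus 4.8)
The plan is to verify the two requirements separately; condition \ref{it:ou1} is routine, whereas condition \ref{it:ou2} carries the real content of the statement. Throughout I take $1<p<\infty$ and write $p'=p/(p-1)$ for the conjugate exponent, so $1<p'<\infty$ and $X^\ast$ is identified with $L^{p'}(\R)$. For \ref{it:ou1}, I would first note that if $n$ is chosen with $np>1$ then the pointwise bound $|\varphi(x)|\le\|\varphi\|_{n,0}(1+|x|)^{-n}$ gives $\|\varphi\|_{L^p}\le C_{n,p}\|\varphi\|_{n,0}$ for all $\varphi\in\S(\R)$; since $\|\cdot\|_{n,0}$ is one of the defining seminorms of $\S(\R)$, the inclusion $\S(\R)\hookrightarrow L^p(\R)$ is continuous. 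Density is classical, since $C_c^\infty(\R)\sbt\S(\R)$ is already dense in $L^p(\R)$ for $p<\infty$.

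For condition \ref{it:ou2} I am given a temperate distribution $\alpha\in L^{p'}(\R)$ with $\supp(\ft\alpha)\sbt\Z$, and I must show $\alpha=0$. The idea is to localize $\ft\alpha$ near one integer at a time. Fixing $k_0\in\Z$, I would choose a smooth function $\psi$ supported in $(k_0-\tfrac12,k_0+\tfrac12)$ with $\psi\equiv1$ near $k_0$, and set $\varphi:=\mathcal F^{-1}\psi\in\S(\R)$. By the convolution formula recalled in the preliminaries, $\ft{(\alpha\ast\varphi)}=\ft\alpha\cdot\psi$, a compactly supported distribution whose support lies in $\Z\cap(k_0-\tfrac12,k_0+\tfrac12)=\{k_0\}$. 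A distribution supported at the single point $k_0$ is a finite linear combination of derivatives of $\delta_{k_0}$, so its inverse Fourier transform has the form $P(x)e^{2\pi i k_0 x}$ with $P$ a polynomial; thus $\alpha\ast\varphi=P(x)e^{2\pi i k_0 x}$. On the other hand $\alpha\in L^{p'}(\R)$ and $\varphi\in\S(\R)\sbt L^1(\R)$, so Young's inequality gives $\alpha\ast\varphi\in L^{p'}(\R)$. Since $|\alpha\ast\varphi(x)|=|P(x)|$ and $p'<\infty$, this is possible only if $P\equiv0$, so $\ft\alpha\cdot\psi=0$ and hence $\ft\alpha$ vanishes in a neighborhood of $k_0$. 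As $k_0\in\Z$ is arbitrary and $\supp(\ft\alpha)\sbt\Z$, I conclude $\ft\alpha=0$ and therefore $\alpha=0$.

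I expect the only delicate ingredient to be the structural fact that a distribution concentrated at a single point is a finite linear combination of derivatives of $\delta_{k_0}$, which is what guarantees that the localized piece $\alpha\ast\varphi$ is genuinely a polynomial times an exponential. Granting this, the remainder is just Young's inequality together with the elementary observation that a nonzero polynomial cannot belong to $L^{p'}(\R)$ when $p'<\infty$.
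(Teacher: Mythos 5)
Your proposal is correct and follows essentially the same route as the paper's own proof: continuity of the embedding via the decay seminorms (the paper uses $\|f\|_{L^p}\le C_p\|f\|_{1,0}$, which is your bound with $n=1$), and for condition (ii) the localization of $\ft\alpha$ near each integer by a compactly supported Fourier bump, the structure theorem for distributions supported at a point, and the observation that a nonzero polynomial (times a unimodular character) cannot lie in $L^{p'}(\R)$ with $p'<\infty$. The only cosmetic differences are that you localize at an arbitrary integer $k_0$ directly where the paper treats the origin and says the other integers are handled similarly, and that you cite Young's inequality explicitly for $\alpha\ast\varphi\in L^{p'}(\R)$.
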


This is proved in \cite[Section 3]{OU18b} as a consequence
of a more general result. 
Below we give a short, self-contained proof of this fact.

\begin{proof}[Proof of \propref{prop:xlp}]
Condition \ref{it:ou1} follows from the inequality 
$\|f\|_{L^p(\R)} \le C_p \|f\|_{1,0}$
which holds for every Schwartz function $f$,
where $C_p = (\int_{\R} (1+|x|)^{-p} dx)^{1/p}$ 
is a finite constant that depends
on $p$ but does  not depend on $f$.

To establish   condition \ref{it:ou2}, we recall that
a temperate distribution $\alpha \in (L^p(\R))^*$
is an element of the space  $L^q(\R)$, $q = p/(p-1)$.
Assume that  $\supp(\ft{\alpha}) \sbt \Z$,
 and let $\chi$ be a Schwartz function on $\R$ with
 $\supp(\ft{\chi}) \sbt [- \frac2{3}, \frac2{3}]$ and
 $\ft{\chi}(t)=1$ on $ [- \frac{1}{3}, \frac{1}{3}]$.
 The distribution $\ft{\al} \cdot \ft{\chi}$ is then
 supported at the origin  and
 coincides with $\ft{\al}$ in a neighborhood of the origin.
  It is well known that a distribution supported at
  the origin is a finite linear combination of derivatives
  of Dirac's measure at the origin
  (see \cite[Theorem 6.25]{Rud91}). In turn this
  implies that $\al \ast \chi$ must be a polynomial.
  But $\al \ast \chi \in L^q(\R)$, so this is possible only if 
$\al \ast \chi = 0$. We conclude that $\ft{\al}$
vanishes in a neighborhood of the origin.
By a similar argument, $\ft{\al}$ must
vanish also  in a neighborhood of any integer other
than the origin. 
Hence $\ft{\al}$, and thus also $\al$, is zero,
which establishes the condition \ref{it:ou2}.
\end{proof}

 We refer the reader to  \cite[Section 3]{OU18b}
 where other examples of Banach function spaces on $\R$
satisfying the conditions \ref{it:ou1} and \ref{it:ou2}
are given.


\end{document}